\definecolor{vegasgold}{rgb}{0.77, 0.7, 0.35}
\definecolor{darkgoldenrod}{rgb}{0.72, 0.53, 0.04}
\definecolor{gold(metallic)}{rgb}{0.83, 0.69, 0.22}
\DeclareFontFamily{U}{wncy}{}
\DeclareFontShape{U}{wncy}{m}{n}{<->wncyr10}{}
\DeclareSymbolFont{mcy}{U}{wncy}{m}{n}
\DeclareMathSymbol{\Sh}{\mathord}{mcy}{"58}
\newtheorem{theorem}{Theorem}[section]
\newtheorem{lemma}[theorem]{Lemma}
\newtheorem*{theorem*}{Theorem}
\newtheorem*{ass*}{Assumption}
\newtheorem{definition}[theorem]{Definition}
\newtheorem{remark}[theorem]{Remark}
\newtheorem{proposition}[theorem]{Proposition}
\newcommand{\Z}{\mathbb{Z}}
\newcommand{\p}{\mathfrak{p}}
\newcommand{\Q}{\mathbb{Q}}
\newcommand{\F}{\mathbb{F}}
\newcommand{\cC}{\mathcal{C}}
\newcommand{\op}[1]{\operatorname{#1}}
\newcommand\mtx[4] { \left( {\begin{array}{cc}
 #1 & #2 \\
 #3 & #4 \\
 \end{array} } \right)}
\numberwithin{equation}{section}
\begin{document}

\title[Surjective Galois representations for Drinfeld modules]{The $T$-adic Galois representation is surjective for a positive density of Drinfeld modules}

\author[A.~Ray]{Anwesh Ray}
\address[Ray]{Chennai Mathematical Institute, H1, SIPCOT IT Park, Kelambakkam, Siruseri, Tamil Nadu 603103, India}
\email{anwesh@cmi.ac.in}

\keywords{Galois representations, Drinfeld modules, function fields in postive characteristic, density results}
\subjclass[2020]{11F80, 11G09, 11R45}

\maketitle

\begin{abstract}
   Let $\F_q$ be the finite field with $q\geq 5$ elements, $A:=\F_q[T]$ and $F:=\F_q(T)$. Assume that $q$ is odd and take $|\cdot|$ to be the absolute value at $\infty$ that is normalized by $|T|=q$. Given a pair $w=(g_1, g_2)\in A^2$ with $g_2\neq 0$, consider the associated Drinfeld module $\phi^w: A\rightarrow A\{\tau\}$ of rank $2$ defined by $\phi_T^w=T+g_1\tau+g_2\tau^2$. Fix integers $c_1, c_2\geq 1$ and define $|w|:=\op{max}\{|g_1|^{\frac{1}{c_1}}, |g_2|^{\frac{1}{c_2}}\}$. I show that when ordered by height, there is a positive density of pairs $w=(g_1, g_2)$, such that the $T$-adic Galois representation attached to $\phi^w$ is surjective.
\end{abstract}

\section{Introduction}
\subsection{Background and motivation}
\par Given an elliptic curve $E$ over $\Q$, and a prime natural number $n>1$, there is a Galois representation 
\[\rho_{E, n}: \op{Gal}(\bar{\Q}/\Q)\rightarrow \op{GL}_2(\Z/n\Z)\] on the $n$-torsion of $E(\bar{\Q})$. Moreover, given a prime $p$, the $p$-adic Tate-module $T_p(E):=\varprojlim_n E[p^n]$ admits a natural Galois action, which is encoded by the representation 
\[\hat{\rho}_{E, p}: \op{Gal}(\bar{\Q}/\Q)\rightarrow \op{GL}_2(\Z_p).\] Passing to the inverse limit of $\rho_{E,n}$, one obtains the \emph{adelic Galois representation}
\[\hat{\rho}_E: \op{Gal}(\bar{\Q}/\Q)\rightarrow \op{GL}_2(\widehat{\Z})\xrightarrow{\sim} \prod_p \op{GL}_2(\Z_p),\] which can also be identified with the product $\prod_p \hat{\rho}_{E, p}$. The celebrated \emph{open image theorem} of Serre \cite{serreopenimage} asserts that if $E$ does not have complex multiplication, then the image of $\hat{\rho}_E$ has finite index in $\op{GL}_2(\widehat{\Z})$. In particular, this means that for all but finitely many primes $p$, the $\rho_{E, p}:\op{Gal}(\bar{\Q}/\Q)\rightarrow \op{GL}_2(\Z/p\Z)$ is surjective. A prime $p$ is said to be \emph{exceptional} if $\rho_{E,p}$ is not surjective. Duke \cite{duke1997elliptic} showed that almost all elliptic curves have no exceptional primes (when ordered according to height). An elliptic curve $E_{/\Q}$ is said to be a \emph{Serre curve} if the index of $\hat{\rho}_E$ in $\op{GL}_2(\widehat{\Z})$ is equal to $2$. This is the minimal possible index, and Jones \cite{Jones} showed that almost all elliptic curves are Serre curves. Generalized results have been obtained for principally polarized abelian varieties over a rational base, cf. \cite{landesmanabelian}.

\par In this article I explore natural analogues of such questions for Drinfeld modules of rank $2$. These arithmetic objects are natural function field analogues of elliptic curves, and they give rise to compatible families of Galois representations. Drinfeld modules (and their generalizations) have come to prominence for their central role in the Langlands program over function fields. Let $q$ be a power of a prime number and $F$ be the rational function field $\F_q(T)$. Let $F^{\op{sep}}$ be a choice of separable closure of $F$. Take $A$ to be the polynomial ring $\F_q[T]$. Take $\widehat{A}:=\varprojlim_{\mathfrak{a}} A/\mathfrak{a}$, where $\mathfrak{a}$ runs over all non-zero ideals in $A$. Pink and R\"utsche \cite{pinkrutsche} proved an analogue of Serre's open image theorem for Drinfeld modules $\phi$ over $F$ without complex multiplication. In greater detail, suppose that $\phi: A\rightarrow F\{\tau\}$ is a Drinfeld module of rank $r$ for which $\op{End}_{\bar{F}}(\phi)=A$, then the image of the adelic Galois representation
\[\hat{\rho}_{\phi}: \op{Gal}(F^{\op{sep}}/F)\rightarrow \op{GL}_r(\widehat{A})\] has finite index in $\op{GL}_r(\widehat{A})$. For odd prime powers $q\geq 5$, Zywina \cite{zywina2011drinfeld} was in fact able to construct an explicit Drinfeld module of rank $2$ for which the adelic Galois representation is surjective. These results have been generalized to certain higher ranks by Chen \cite{Chen1,Chen2}. There has also been recent interest in the study of the mod-$T$ representation for certain infinite families of Drinfeld modules \cite{GowMcGuire}, as well as its application to the inverse Galois problem over the rational function field.
\subsection{Main result}
\par The strategy taken by Duke to prove that almost all elliptic curves have no expectional primes makes use of the large sieve. This involves local estimates that rely on bounds for certain sums of Hurwitz class numbers (in congruence classes). These sums are related to the fourier coefficients of certain modular forms, and the estimates rely on the Ramanujan bound (cf. \cite[Lemma 3]{duke1997elliptic}). The approach taken for Drinfeld modules in this article is indeed different, since Duke's method does not readily generalize to this context. I consider only the Galois representation at the rational prime $\p=(T)$. I then study the \emph{density} of rank $2$ Drinfeld modules $\phi$ for which the $T$-adic Galois representation 
\begin{equation}\label{T-adic Galrep}\hat{\rho}_{\phi, \p}:\op{Gal}(F^{\op{sep}}/F)\rightarrow \op{GL}_2(A_\p)\end{equation}is surjective. 

\begin{theorem}\label{main thm}
    Assume that $q\geq 5$ is odd. Then there is a positive density of Drinfeld modules of rank $2$ over $A:=\F_q[T]$ for which the $T$-adic Galois representation \eqref{T-adic Galrep} is surjective.
\end{theorem}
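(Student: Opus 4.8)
The plan is to decouple the problem into a group-theoretic reduction to the residue field and a density statement for the mod-$\p$ representation, where $\p=(T)$ and $A_\p=\F_q\lb T\rb$. For the reduction I would first pin down the determinant: the Drinfeld-module analogue of the Weil pairing identifies $\bigwedge^2_{A_\p}T_\p(\phi^w)$ with the $\p$-adic Tate module of the Carlitz module $C$, so that $\det\hat\rho_{\phi^w,\p}$ is the $\p$-adic Carlitz cyclotomic character $\chi_\p\colon\op{Gal}(F^{\op{sep}}/F)\to A_\p^\times$. This character is surjective for every $w$, since the Carlitz $\p$-power cyclotomic extension has Galois group exactly $A_\p^\times=\F_q^\times\times(1+TA_\p)$. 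It then suffices to control the image inside $\op{SL}_2(A_\p)$, and here I would invoke a Frattini-type lifting lemma in the style of Serre's argument for $\op{SL}_2(\Z_p)$: the graded pieces $\Gamma_n/\Gamma_{n+1}$ of the congruence filtration of $\op{SL}_2(\F_q\lb T\rb)$ are each isomorphic to the adjoint module $\mathfrak{sl}_2(\F_q)$, which is irreducible precisely because $q$ is odd, while $\op{SL}_2(\F_q)$ is perfect because $q\geq5$. Consequently any closed subgroup of $\op{SL}_2(A_\p)$ surjecting onto $\op{SL}_2(\F_q)$ modulo $T$ is already everything. Combining the two inputs, $\hat\rho_{\phi^w,\p}$ is surjective if and only if the mod-$\p$ representation $\barrho_{\phi^w,\p}\colon\op{Gal}(F^{\op{sep}}/F)\to\op{GL}_2(\F_q)$ is surjective; this is exactly where the hypothesis $q\geq5$ odd enters, matching that of Zywina.

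It remains to show that $\barrho_{\phi^w,\p}$ is surjective for a positive density of $w$. Since the determinant already surjects onto $\F_q^\times$, the image $G_w$ contains $\op{SL}_2(\F_q)$ if and only if $G_w$ lies in no maximal subgroup $M$ with $M\cap\op{SL}_2(\F_q)\neq\op{SL}_2(\F_q)$; by Dickson's classification these are the Borel subgroups, the normalizers of a split or non-split maximal torus, the subfield subgroups, and the preimages of $A_4,S_4,A_5$ in $\op{PGL}_2(\F_q)$. The representation is recorded explicitly by the torsion polynomial: the separability of $\phi^w_T(x)=Tx+g_1x^q+g_2x^{q^2}$ (its linear coefficient $T$ is nonzero) gives $\phi^w[T]\cong\F_q^2$, the nonzero points are the roots of $g_2x^{q^2-1}+g_1x^{q-1}+T$, and the substitution $y=x^{q-1}$ identifies the $q+1$ lines of $\phi^w[T]$ with the roots of $h_w(y)=g_2y^{q+1}+g_1y+T$, on which Galois acts through the action of $\op{PGL}_2(\F_q)$ on $\mathbb{P}^1(\F_q)$. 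Thus the projective image is the Galois group of $h_w$ as a transitive group on its $q+1$ roots.

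To force $G_w$ to be everything I would engineer Frobenius elements at a fixed pair of auxiliary primes $\ell_1,\ell_2\neq T$. By Dedekind's theorem the factorization type of $h_w$ modulo $\ell$, which depends only on $w\bmod\ell$, is the cycle type of $\op{Frob}_\ell$ on $\mathbb{P}^1(\F_q)$. If $h_w$ is irreducible of degree $q+1$ modulo $\ell_1$, then $G_w$ contains a non-split Singer element of order $q+1$: for $q\geq5$ it is irreducible (excluding the Borel subgroups), has order exceeding $5$ (excluding $A_4,S_4,A_5$), lies in no split torus normalizer, and its order excludes the subfield subgroups. If $h_w$ has modulo $\ell_2$ a split factorization corresponding to a regular semisimple element whose eigenvalue ratio generates $\F_q^\times$, then $G_w$ contains an element lying in no non-split torus normalizer and in no subfield subgroup. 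Together these two conditions exclude every maximal subgroup, so $G_w\supseteq\op{SL}_2(\F_q)$, and with $\det$ surjective we obtain $G_w=\op{GL}_2(\F_q)$. What makes this unconditional is a specialization argument: treating $g_1,g_2$ as independent transcendentals $s_1,s_2$, Zywina's explicit surjective Drinfeld module forces the universal mod-$\p$ monodromy of the family over $\F_q(s_1,s_2,T)$ to be all of $\op{GL}_2(\F_q)$, and for all but finitely many auxiliary primes the fibral monodromy over the residue field $A/\ell$ of the trinomial family $s_2y^{q+1}+s_1y+(T\bmod\ell)$ remains $\op{PGL}_2(\F_q)$; hence by the Lang--Weil estimates a positive proportion of residue classes $w\bmod\ell$ realize the Singer type and a positive proportion realize the required split type.

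The main obstacle is converting these fibrewise positive proportions into a positive density over $A^2$ ordered by the weighted height $|w|=\op{max}\{|g_1|^{1/c_1},|g_2|^{1/c_2}\}$. Concretely one counts pairs with $\deg g_1\leq c_1\log_q X$ and $\deg g_2\leq c_2\log_q X$ lying in the good residue classes modulo $\ell_1\ell_2$, where the conditions at $\ell_1$ and $\ell_2$ are independent by the Chinese remainder theorem; as $X\to\infty$ the proportion in each fixed class tends to a positive constant depending only on $\ell_1,\ell_2$ and on $c_1,c_2$, so the density is bounded below by the product of the fibrewise proportions. The reason one obtains positive rather than full density is genuinely characteristic-$p$ in nature: the covers cutting out the torsion fields are wildly ramified, so the quantitative Hilbert-irreducibility heuristics that give density-one statements in characteristic zero are unavailable, and one must be content with the explicit positive lower bound furnished by the congruence classes. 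Assembling the group theory of the first paragraph with this count then completes the proof.
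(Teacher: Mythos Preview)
Your lifting step contains a genuine error. You assert that any closed subgroup of $\op{SL}_2(A_\p)$ surjecting onto $\op{SL}_2(\F_q)$ modulo $T$ must already be all of $\op{SL}_2(A_\p)$, by analogy with Serre's argument for $\op{SL}_2(\Z_p)$. But $A_\p=\F_q\lb T\rb$ is an equicharacteristic local ring: the residue field $\F_q$ sits inside $A_\p$ as the constants, and hence $\op{SL}_2(\F_q)$ itself is a closed (finite) subgroup of $\op{SL}_2(A_\p)$ that surjects modulo $T$ yet is obviously proper. The extension
\[
1\longrightarrow \mathfrak{sl}_2(\F_q)\longrightarrow \op{SL}_2(A_\p/\p^2)\longrightarrow \op{SL}_2(\F_q)\longrightarrow 1
\]
is split in this setting, so irreducibility of the adjoint module does not force the image to fill out the first congruence layer. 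This is exactly why the paper does \emph{not} reduce to surjectivity of $\bar\rho$ alone: it invokes the Pink--R\"utsche criterion, which in addition to $\mathcal{H}^{[0]}=\op{GL}_2(\F_q)$ and $\det(\mathcal{H})=A_\p^\times$ requires a \emph{non-scalar} element in $\mathcal{H}^{[1]}$. The paper manufactures that element by forcing a prime $\lambda_2$ of stable bad reduction with $v_{\lambda_2}(j)=-1$, so that the inertia at $\lambda_2$ produces the full unipotent subgroup of $\op{GL}_2(A/\p^2)$ (Tate-uniformization input), and in particular a nontrivial unipotent element congruent to the identity modulo $\p$. Your argument has no mechanism for producing any element of level $\geq 1$, so as written it cannot conclude surjectivity of $\hat\rho_{\phi^w,\p}$.

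Apart from this gap, your overall strategy is also rather different from the paper's. For irreducibility of $\bar\rho$ the paper does not analyze the degree-$(q+1)$ resolvent $h_w$ or appeal to Lang--Weil over residue fields; instead it uses Gekeler's explicit formula for the Frobenius characteristic polynomial at a degree-one prime $\lambda_1$ of good reduction, arranging congruences on $(g_1,g_2)$ so that $\op{trace}\bar\rho(\sigma_1)=0$ and $\det\bar\rho(\sigma_1)=-\eta$ with $\eta$ a fixed non-square, which immediately rules out a Borel. The order-$q$ subgroup then comes from the same prime $\lambda_2$ of stable bad reduction used above, and $\op{SL}_2(\F_q)\subset G$ follows from an elementary lemma. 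For the determinant the paper argues via the inertia action at $\p$ under height-$1$ good reduction (requiring $T\nmid g_1,g_2$) rather than the Weil pairing; your Carlitz-character argument would also work here. The upshot is that the paper's conditions are completely explicit congruences on $g_1$ modulo $T(T-a_1)(T-a_2)$ and on $g_2$ modulo $T(T-a_1)(T-a_2)^2$, giving a clean lower density of $q^{-7}$, whereas your route through Chebotarev and Lang--Weil would at best yield an inexplicit constant---and, as it stands, does not close the gap at the first congruence layer.
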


One may refer to section \ref{section 2.3}, where a precise notion of density is given for Drinfeld modules over $A$. \subsection{Organization} Including the introduction, this article consists of three sections. In section \ref{s 2}, I discuss the basic properties of Drinfeld modules and their associated Galois representations. In the final section, I prove that the surjectivity of the $T$-adic representation can indeed be detected from certain very explicit congruence conditions on the coefficients of a Drinfeld module with coefficients in $A$ (cf. Theorem \ref{main thm 0}). These conditions are very explicit, and they are leveraged to prove that the density of Drinfeld modules satisfying these conditions is positive. In fact, a lower bound for this density can be given, and I refer to the Remark \ref{only remark} for a discussion on the issues in obtaining a satisfactory lower bound. 
\subsection{Outlook} One of the key methods used is the algorithm of Gekeler for computing characteristic polynomials of rank $2$ Drinfeld modules. I would expect that they can be generalized to prove similar results for the Galois representations on $\p$-adic Tate modules, when $\p$ is a prime with small degree. However, this has not been pursued since one does not have a general statement that applied to all $\p$, and one only considers $\p=(T)$ in this article. I am hopeful that Theorem \ref{main thm} shall pave the way for interesting developments in the future.
\section{Drinfeld modules and associated Galois representations}\label{s 2}
\subsection{Preliminary notions}
\par I discuss the theory of Drinfeld modules; my notation is consistent with that in \cite{papibook}. Let $p$ be an odd prime number and $q=p^n$. Set $\F_q$ to be the field with $q$ elements and assume that $q\geq 5$. Let $A$ be the polynomial ring $\F_q[T]$ and $F$ its fraction field $\F_q(T)$. Given a non-zero ideal $\mathfrak{a}$ with generator $a$, set $\op{deg}\mathfrak{a}:=\op{deg}_T(a)$. Let $K$ be a field extension of $\F_q$, $K$ is called an $A$-field if it is equipped with an $\F_q$-algebra homomorphism $\gamma: A\rightarrow K$. Given an $A$-field $K$, the $A$-characteristic of $K$ is defined as follows
\[\op{char}_A(K):=\begin{cases}
    0 &\text{ if }\gamma\text{ if injective,}\\
    \op{ker}\gamma & \text{ otherwise.}
\end{cases}\]
Then $K$ is of \emph{generic characteristic} if $\op{char}_A(K)=0$. Take $K^{\op{sep}}$ to be a choice of separable closure of $K$, and set $\op{G}_K:=\op{Gal}(K^{\op{sep}}/K)$. The rational function field $F$ is an $A$-field of generic characteristic where the map $\gamma: A\hookrightarrow F$ is the natural inclusion map. Given a non-zero prime $\lambda$ of $A$, take $k_\lambda$ to denote the residue field of $A$ at $\lambda$. Then, $k_\lambda$ is an $A$-field of characteristic $\lambda$, where $\gamma_\lambda: A\rightarrow k_\lambda$ is the mod-$\lambda$ reduction map.  

\par Given an $\F_q$-algebra $K$, set $K\{\tau\}$ to denote the noncommutative ring of twisted polynomials over $K$, the elements of which are polynomials $f(\tau)=\sum_{i=0}^d a_i \tau^i$, with coefficients $a_i\in K$. Addition is defined \emph{term-wise} as follows
\[\begin{split}& \sum_i a_i \tau^i+\sum_i b_i \tau^i:=\sum_i(a_i+b_i) \tau^i,\\
& (a\tau^i) (b \tau^j):=ab^{q^i} \tau^{i+j}.\end{split}\] Given $f=\sum_{i=0}^d a_i \tau^i$, set
\[f(x):=\sum_{i=0}^d a_i x^{q^i}.\] The polynomial $f(x)$ is an $\F_q$-linear polynomial, i.e., given $c_1, c_2\in \F_q$, the following algebraic relation holds
\[f(c_1 x+c_2 y)=c_1 f(x)+c_2 f(y)\] in $K[x,y]$. I take $K\langle x \rangle $ to denote non-commutative ring of $\F_q$-linear polynomials defined by the relations
\[(f+g)(x):=f(x)+g(x)\text{ and }(f\cdot g)(x):=f(g(x)). \]
The map $f(\tau)\mapsto f(x)$ is an isomorphism of $\F_q$-algebras from $K\{\tau\}$ to $K\langle x\rangle$. 

\par Next, I define the notion of \emph{height} $\op{ht}_\tau (f)$ and \emph{degree} $\op{deg}_\tau (f)$ of a twisted polynomial $f(\tau)\in K\{\tau\}$. Write
\[f(\tau)=a_h \tau^h+a_{h+1} \tau^{h+1}+\dots +a_d \tau^d,\] where $a_h, a_d\neq 0$, and set 
\[\op{ht}_\tau (f):=h\text{ and }\op{deg}_\tau (f):=d.\]
Thus, one finds that the degree of $f(x)$ as a polynomial in $x$ is $q^{\op{deg}_\tau(f)}$. When referring to the degree of the polynomial in $x$, I write 
\[\op{deg}_x f(x):= q^{\op{deg}_\tau(f)}.\]Let $\partial:K\{\tau\}\rightarrow K$ be the \emph{derivative map} sending 
\[\sum_n a_n \tau^n\mapsto a_0.\] Thus, $\op{ht}(f)=0$ if and only if $\partial(f)\neq 0$. Note that $f(x)$ is separable if and only if its derivative in $x$ does not vanish. Since $\partial f=\frac{d}{dx} (f(x))$, one finds that $f(x)$ is separable if and only if $\op{ht}(f)=0$. With these conventions at hand, I recall the definition of a Drinfeld module of rank $r\geq 1$. In this article, I shall only be concerned with the case in which $r=2$. 
\begin{definition}
    Let $r\geq 1$ be an integer and $K$ be an $A$-field. A \emph{Drinfeld module} of rank $r$ over $K$ is a homomorphism of $\F_q$-algebras 
    \[\phi: A\rightarrow K\{\tau\},\] sending $a\in A$ to $\phi_a\in K\{\tau\}$, such that 
    \begin{itemize}
        \item $\partial(\phi_a)=\gamma(a)$ for all $a\in A$; 
        \item $\op{deg}_\tau(\phi_a)=r\op{deg}_T(a)$. 
    \end{itemize}
\end{definition}
The second condition is equivalent to requiring that $\op{deg}_\tau(\phi_T)=r$. I shall write 
\[\phi_T=T+g_1 \tau +g_2\tau^2 +\dots +g_r \tau^r,\] and observe that since $\op{deg}_\tau \phi_T=r$, $g_r\neq r$. The coefficients $(g_1, \dots, g_r)$ completely describe the module $\phi$.
\par Let $\phi$ and $\psi$ be Drinfeld modules, a morphism $u:\phi\rightarrow \psi$ is a function $u(\tau)\in K\{\tau\}$ such that $u \phi_a=\psi_a u$ for all $a\in A$. An isogeny is a non-zero morphism. It is easy to see that if there exists an isogeny, then, the rank of $\phi$ equals the rank of $\psi$. The group of all morphisms from $\phi$ to $\psi$ is denoted $\op{Hom}_K(\phi , \psi)$. An isomorphism between Drinfeld module is an isogeny with an inverse. It is easy to see that the only invertible elements in $K\{\tau\}$ are the non-zero constant elements $c\in K$. Write $\phi_T=T+g_1\tau+\dots +g_r \tau^r$ and $\psi_T=T+g_1'\tau+\dots+g_r' \tau^r$, one finds that $\phi$ is isomorphic to $\psi$ if and only if there is a non-zero constant $c$ such that 
\[g_i'=c^{q^i-1} g_i,\]
for $i\in [1, r]$. When $r=2$, the $j$-invariant of $\phi$ is defined as 
\[j(\phi):=\frac{g_1^{q+1}}{g_2}.\]
Two rank $2$ Drinfeld modules $\phi$ and $\phi'$ are isomorphic if and only if $j(\phi)=j(\phi')$ (cf. \cite[Ch. 3]{papibook}). 
\par Let $\Omega_F^\ast$ denote the set of discrete valuations of $F$, and $\Omega_F$ be the valuations corresponding to non-zero prime ideals of $A$. For $\lambda\in \Omega_F$, set $A_\lambda$ denote the completion of $A$ at $\lambda$, and $F_\lambda$ be its field of fractions. Taking $\mathfrak{m}_\lambda$ to be the maximal ideal of $A_\lambda$, set $k_\lambda:=A_\lambda/\mathfrak{m}_\lambda$. Let $\phi$ be a Drinfeld module over $F$ and $\lambda\in \Omega_F$. I denote by $\phi_\lambda$ the localized Drinfeld module over $F_\lambda$, defined to be the composite
\[\phi_\lambda: A\xrightarrow{\phi} F\{\tau\}\rightarrow F_\lambda\{\tau\},\] where the second map is induced by the natural inclusion of $F$ into $F_\lambda$. Say that $\phi$ has \emph{stable reduction} at $\lambda$ if there is a Drinfeld module $\psi$ over $F_\lambda$ that is isomorphic to $\phi_\lambda$ with coefficients in $A_\lambda$, for which the reduction \[\bar{\psi}:A\rightarrow k_\lambda\{\tau\}\] is a Drinfeld module. The rank of $\bar{\psi}$ is referred to as the \emph{reduction rank} of $\phi$ at $\lambda$. If the reduction rank is $r$, then $\phi$ has \emph{good reduction} at $\lambda$. Let $\lambda\in \Omega_F$ be a prime of good reduction, and consider the reduced Drinfeld module
\[\bar{\psi}:A\rightarrow k_\lambda\{\tau\}.\] Let $a$ be the monic generator of $\lambda$. The element $a$ is in the kernel of the reduction map $\gamma: A\rightarrow k_\lambda$. The constant term of $\phi_a$ is $\gamma(a)=0$, and thus, $\op{ht}_\tau(\phi_a)>0$. In fact, $\op{deg}_T(a)$ divides $\op{ht}_\tau(\phi_a)$ and the \emph{height of $\bar{\psi}$} is defined to be the integer
\[H(\bar{\psi}):=\frac{\op{ht}_\tau(\phi_a)}{\op{deg}_T(a)},\] which one refers to as the height of the reduction of $\phi$ at $v$. It is easy to see that $H(\bar{\psi})\in [1, r]$, I refer to \cite[section 3.2]{papibook} for further details. The quantity $H(\bar{\psi})$ is called the height of the reduction of $\phi$ at $\lambda$.

\par Given a Drinfeld module $\phi$ of rank $r$ over $F$, there is an isomorphic Drinfeld module $\psi$ for which 
\[\psi_T=T+g_1 \tau +g_2 \tau^2+\dots g_r \tau^r\] with coefficients $g_i\in A$. A tuple $(g_1, \dots, g_r)\in A^r$ is said to be \emph{minimal} if there is no non-constant polynomial $f\in A$ such that $g_i$ is divisible by $c^{q^i-1}$ for all $i\in [1, r]$. It is easy to see that $\psi$ can be uniquely chosen so that $(g_1, \dots, g_r)$ is minimal in the above sense. Moreover, the primes of bad reduction are precisely those that divide the leading coefficient $g_r$. It is thus clear that there are only finitely many primes of bad reduction. The primes of unstable reduction are those that divide every single coefficient $g_i$ for $i\in [1, r]$. 

\subsection{Galois representations}
\par I discuss Galois representations associated to Drinfeld modules over $F$ or rank $r$. Let $\phi$ be a Drinfeld module over $F$ and $a$ be a non-zero polynomial in $A$. Since the constant coefficient of $\phi_a$ is $\gamma(a)=a$, it follows that $\phi_a(x)$ is a separable polynomial. Let $\phi[a]\subset F^{\op{sep}}$ denote the set of roots of $\phi_a(x)$. I note that if $b\in A$, then, 
\[\phi_a(\phi_b(x))=\phi_{ab}(x)=\phi_b(\phi_a (x))=0.\]One defines a twisted $A$-module structure on $F^{\op{sep}}$ by $b\ast \alpha:=\phi_b(\alpha)$. Then, the above computation shows that $\phi[a]$ is an $A$-submodule of $F^{\op{sep}}$. Given a non-zero ideal $\mathfrak{a}$ in $A$, set $\phi[\mathfrak{a}]:=\phi[a]$, where $a$ is the monic generator of $\mathfrak{a}$. Then, one finds that $\phi[\mathfrak{a}]\simeq (A/\mathfrak{a})^r$. The Galois group $\op{G}_F:=\op{Gal}(F^{\op{sep}}/F)$ naturally acts on $\phi[\mathfrak{a}]$ by $A$-module automorphisms. I denote by \[\rho_{\phi, \mathfrak{a}}:\op{G}_F\rightarrow \op{Aut}_A(\phi[\mathfrak{a}])\xrightarrow{\sim}\op{GL}_r(A/\mathfrak{a})\] the associated Galois representation. Let $\p$ be a non-zero prime ideal of $A$ and take $T_\p(\phi)$ to denote the $\p$-adic Tate-module, defined to be the inverse limit
\[T_\p(\phi):=\varprojlim_n \phi[\p^n].\]
Choosing a $A_{\p}$-basis of $T_{\p}(\phi)$ the associated Galois representation is denoted 
\[\hat{\rho}_{\phi, \p}: \op{G}_F\rightarrow \op{Aut}_{A_\p} (T_\p(\phi))\xrightarrow{\sim} \op{GL}_r(A_\p).\]
The mod-$\p$ reduction of $\hat{\rho}_{\phi, \p}$ is identified with $\rho_{\phi, \p}$. The Galois representation $\rho_{\phi, \mathfrak{a}}$ is unramified at all primes $\mathfrak{l}\nmid \mathfrak{a}$ of $A$ at which $\phi$ has good reduction. Moreover, given a non-zero prime $\mathfrak{l}\neq \mathfrak{p}$ of $A$, the representation $\hat{\rho}_{\phi, \p}$ is unramified at $\mathfrak{l}$ if and only if $\mathfrak{l}$ is a prime of good reduction for $\phi$. 

\par Assume that $\phi$ has good reduction at $\p$. Let me briefly discuss the structure of $T_\p(\phi)$ as a module over $\op{G}_{\p}:=\op{G}_{F_\p}$. Let $H$ denote the height of the reduction of $\phi$ at $\p$. Let $n$ be a positive integer and $v=v_\p$ be the valuation on $F_\p$ normalized by $v(\pi_v)=1$, where $\pi_v$ is a uniformizer of $F_\p$. \[\phi[\p^n]^0:=\{\alpha\in \phi[\p^n]\mid v(\alpha)>0\}.\] For ease of notation, denote by $\bar{\phi}$ the reduction of $\phi$ at $\p$, and $\bar{\phi}[\p^n]$ be its $\p^n$-torsion module. Then, $\bar{\phi}[\p^n]$ is an unramified $\op{G}_\p$-module and can be identified with the quotient $\phi[\p^n]/\phi[\p^n]^0$. One has a short exact sequence of $\op{G}_{\p}$-modules
\[0\rightarrow \phi[\p^n]^0\rightarrow \phi[\p^n]\rightarrow \bar{\phi}[\p^n]\rightarrow 0,\]cf. \cite[(6.3.1) on p. 371]{papibook}. Taking \[T_\p(\phi)^0:=\varprojlim_n \phi[\p^n]^0\text{ and }T_\p(\bar{\phi}):=\varprojlim_n \bar{\phi}[\p^n],\] and obtain a short exact sequence 
\[0\rightarrow T_\p(\phi)^0\rightarrow T_\p(\phi)\rightarrow T_\p(\bar{\phi})\rightarrow 0,\] cf. \cite[Proposition 6.3.9]{papibook}.
As an $A_\p$-module, \[\phi[\p^n]^0\simeq (A/\p^n)^H\text{, and }\bar{\phi}[\p^n]\simeq (A/\p^n)^{r-H}.\] Thus, one finds that 
\[T_\p(\phi)^0\simeq A_\p^H\text{ and } T_\p(\bar{\phi})\simeq A_\p^{r-H}.\]
Let $F_\p(\phi[\p^n]^0)$ be the field extension of $F_\p$ generated by $\phi[\p^n]^0$. In other words, it is the fixed field of the kernel of the action map
\[\varrho: \op{G}_\p\rightarrow \op{Aut}(\phi[\p^n]^0)\xrightarrow{\sim} \op{GL}_H(A/\p^n).\]

\begin{lemma}\label{ramification count lemma}
    With respect to notation above, the ramification index of $F_\p(\phi[\p^n]^0)/F_\p$ is divisible by $q^{N(n-1)}(q^N-1)$, where $N:=H\op{deg}(\p)$.
\end{lemma}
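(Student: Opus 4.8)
The plan is to determine the $v$-adic valuations of the points of $\phi[\p^n]^0$ directly, by an iterated Newton polygon analysis, and then to read off the ramification index from the denominators that appear. Since $\phi$ has good reduction at $\p$, I may assume its coefficients lie in $A_\p$; let $a$ be the monic generator of $\p$ and write $\phi_a(x)=\sum_{i=0}^{r\op{deg}\p} c_i x^{q^i}$ as an $\F_q$-linear polynomial over $A_\p$. Because $a$ is a uniformizer of $A_\p$ one has $v(c_0)=v(a)=1$, and because the reduction $\bar\phi$ has height $H$, so that $\op{ht}_\tau(\bar\phi_a)=N=H\op{deg}(\p)$, the coefficients satisfy $v(c_i)\geq 1$ for $0\leq i<N$ and $v(c_N)=0$. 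These three facts are the only input from good reduction that I will need.

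First I would treat the base case $n=1$. The Newton polygon of $\phi_a(x)$ has $(1,1)$ and $(q^N,0)$ as consecutive vertices: every intermediate point $(q^i,v(c_i))$ with $0<i<N$ lies strictly above the segment joining them, since $v(c_i)\geq 1$ exceeds the segment's height $1-\tfrac{q^i-1}{q^N-1}<1$. That segment has slope $-\tfrac{1}{q^N-1}$, so the $q^N-1$ nonzero roots of positive valuation, which are exactly $\phi[\p]^0\setminus\{0\}$, each have valuation $\tfrac{1}{q^N-1}$. Next I would run the induction. Fix a nonzero $\alpha_1\in\phi[\p]^0$ and successively choose $\alpha_k$ with $\phi_a(\alpha_k)=\alpha_{k-1}$; since $\phi_{a^{k}}(\alpha_k)=\phi_{a^{k-1}}(\alpha_{k-1})=\dots=0$, each $\alpha_k$ lies in $\phi[\p^k]$, and I must track its valuation. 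The key computation is the Newton polygon of $\phi_a(x)-\alpha$ when $v(\alpha)=s$ with $0<s\leq\tfrac{1}{q^N-1}$: adjoining the new vertex $(0,s)$ coming from the constant term $-\alpha$, the segment from $(0,s)$ to $(q^N,0)$ again lies below all intermediate points $(q^i,v(c_i))$, as its height $s(1-q^{i-N})<1\leq v(c_i)$ for $i<N$, has slope $-\tfrac{s}{q^N}$, and therefore contributes exactly $q^N$ roots of positive valuation, each equal to $\tfrac{s}{q^N}$. Iterating from $s=\tfrac{1}{q^N-1}$ yields $v(\alpha_n)=\tfrac{1}{q^{N(n-1)}(q^N-1)}$, with $\alpha_n\in\phi[\p^n]^0$.

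Finally, since $F_\p(\alpha_n)\subseteq F_\p(\phi[\p^n]^0)$ and the fraction $\tfrac{1}{q^{N(n-1)}(q^N-1)}$ is already in lowest terms, the value group of $F_\p(\alpha_n)$ contains $\tfrac{1}{q^{N(n-1)}(q^N-1)}\Z$, which forces $q^{N(n-1)}(q^N-1)$ to divide the ramification index of $F_\p(\alpha_n)/F_\p$; this divisibility is inherited by the larger field $F_\p(\phi[\p^n]^0)$, giving the claim. The main obstacle I anticipate is purely book-keeping: verifying at each level that the exhibited Newton-polygon segment really is the first edge, namely that all intermediate coefficient points stay above it and that only the connected, positive-valuation roots are being counted, so that the valuations propagate exactly under $s\mapsto s/q^N$ rather than merely being bounded. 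Once that is pinned down, the divisibility statement is immediate, and in fact the argument shows every point of $\phi[\p^n]^0$ of exact level $n$ realizes this valuation, which is slightly more than the lemma requires.
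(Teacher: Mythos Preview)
Your Newton polygon argument is correct and complete: the valuations $v(\alpha_n)=\tfrac{1}{q^{N(n-1)}(q^N-1)}$ follow exactly as you computed, and the divisibility of the ramification index is immediate. The paper does not give its own proof of this lemma but simply cites \cite[Lemma 6.3.10]{papibook}; your argument is precisely the standard proof that appears there, so you have reproduced the referenced result rather than taken a different route.
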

\begin{proof}
    The result is \cite[Lemma 6.3.10]{papibook}.
\end{proof}
\begin{proposition}\label{det surjective prop}
    With respect to the above notation, assume that $H=1$. Then, the determinant map 
    \[\op{det}\hat{\rho}_{\phi, \p}: \op{G}_F\rightarrow A_\p^\times\] is surjective.
\end{proposition}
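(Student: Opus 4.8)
The plan is to argue locally at $\p$, using the filtration of the Tate module coming from good reduction together with the ramification estimate of Lemma \ref{ramification count lemma}. Since $A_\p^\times=\varprojlim_n (A/\p^n)^\times$ is an inverse limit of finite groups, it suffices to show that the mod-$\p^n$ determinant $\det\rho_{\phi,\p^n}\colon \op{G}_F\to (A/\p^n)^\times$ is surjective for every $n\ge 1$: surjectivity onto $A_\p^\times$ then follows because $\op{G}_F$ is compact, so the image of $\det\hat{\rho}_{\phi,\p}$ is closed, and surjectivity at each finite level makes it dense.

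First I would restrict to a decomposition group $\op{G}_\p\subseteq\op{G}_F$ and invoke the exact sequence of $\op{G}_\p$-modules $0\to\phi[\p^n]^0\to\phi[\p^n]\to\bar{\phi}[\p^n]\to 0$. In a basis adapted to this filtration, $\rho_{\phi,\p^n}|_{\op{G}_\p}$ is block upper-triangular, so its determinant factors as the product of $\varrho$, the action on the \emph{line} $\phi[\p^n]^0\cong A/\p^n$ (this is where the hypothesis $H=1$ enters, making $\phi[\p^n]^0$ rank one so that $\varrho$ lands in $(A/\p^n)^\times$), and the determinant of the action on the unramified quotient $\bar{\phi}[\p^n]$. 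Passing to the inertia subgroup $I_\p\subseteq\op{G}_\p$, the unramified contribution becomes trivial, so $\det\rho_{\phi,\p^n}|_{I_\p}=\varrho|_{I_\p}$.

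The crux is then to show that $\varrho|_{I_\p}$ already surjects onto $(A/\p^n)^\times$. Writing $d=\op{deg}(\p)$, the hypothesis $H=1$ gives $N=d$, and a direct count yields $|(A/\p^n)^\times|=q^{d(n-1)}(q^d-1)=q^{N(n-1)}(q^N-1)$. Let $G$ denote the image of $\varrho$, which is the Galois group of $F_\p(\phi[\p^n]^0)/F_\p$, and let $e=|I|$ be the order of its inertia subgroup, i.e.\ the ramification index. Combining the chain $e\le |G|\le |(A/\p^n)^\times|=q^{N(n-1)}(q^N-1)$ with the divisibility $q^{N(n-1)}(q^N-1)\mid e$ supplied by Lemma \ref{ramification count lemma} forces equality throughout, so that $e=|G|=|(A/\p^n)^\times|$. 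Hence $G=(A/\p^n)^\times$ and the extension is totally ramified, whence $\varrho|_{I_\p}$ is onto.

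Assembling these, $\det\rho_{\phi,\p^n}$ restricted to $I_\p\subseteq\op{G}_F$ is surjective for every $n$, so $\det\rho_{\phi,\p^n}$ is surjective, and taking the inverse limit yields surjectivity of $\det\hat{\rho}_{\phi,\p}$ onto $A_\p^\times$. I expect the only delicate point to be the bookkeeping in the third paragraph: verifying that the lower bound from Lemma \ref{ramification count lemma} \emph{exactly} saturates the order of $(A/\p^n)^\times$, so that the single inequality $e\le|G|\le|(A/\p^n)^\times|$ together with the lemma simultaneously pins down both that $\varrho$ is surjective and that the extension is totally ramified. It is precisely the assumption $H=1$ that both forces $N=\op{deg}(\p)$ and keeps $T_\p(\phi)^0$ one-dimensional, without which this saturation fails.
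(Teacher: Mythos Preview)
Your argument is correct and follows essentially the same route as the paper: restrict to inertia at $\p$, use the filtration from good reduction to identify $\det\rho_{\phi,\p^n}|_{I_\p}$ with the one-dimensional character $\varrho|_{I_\p}$ on $\phi[\p^n]^0$ (this is where $H=1$ is used), and then apply Lemma~\ref{ramification count lemma}. The paper simply asserts that the lemma implies surjectivity of $\psi_n$, whereas you spell out the counting explicitly---namely that $q^{N(n-1)}(q^N-1)$ both divides the ramification index and equals $|(A/\p^n)^\times|$, squeezing $e=|G|=|(A/\p^n)^\times|$---which is exactly the bookkeeping the paper leaves implicit.
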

\begin{proof}
    It suffices to show that for all $n>0$, the determinant map 
     \[\op{det}\rho_{\phi, \p^n}: \op{G}_F\rightarrow \left(A/\p^n\right)^\times\] is surjective. Consider the restriction of $\op{det}\rho_{\phi, \p^n}$ to the inertia group $\op{I}_\p$ of $\op{G}_\p$. Let $\psi:\op{I}_\p\rightarrow A_\p^\times$ be the character for the action of the inertia group on $T_\p(\phi)^0$ and $\psi_n$ be its mod-$\p$ reduction. It follows from Lemma \ref{ramification count lemma} that $\psi_n$ is surjective for all $n$. Since $\bar{\phi}[\p^n]$ is unramified at $\p$, it follows that $\op{det}\hat{\rho}_{\phi, \p}$ coincides with $\psi$, when restricted to $\op{I}_\p$. It follows that 
    \[\op{det}\hat{\rho}_{\phi, \p}(\op{I}_\p)=\psi(\op{I}_\p)=A_\p^\times .\] This shows that the determinant map $\op{det}\hat{\rho}_{\phi, \p}$ is surjective.
\end{proof}
\subsection{Counting Drinfeld modules}\label{section 2.3}
\par In this section, I consider the data defining a Drinfeld module over $A$ of rank $2$. I say that $\phi$ is defined over $A$ if 
\[\phi_T=T+g_1\tau +g_2\tau^2,\] where $(g_1, g_2)\in A^2$. It is easy to see that any Drinfeld module over $F$ is isomorphic to a Drinfeld module defined over $A$. Since I assume that $\phi$ has rank $2$, it follows that $g_2\neq 0$. The pair $w=(g_1, g_2)\in A^2$ with $g_2\neq 0$ shall be referred to as a \emph{Drinfeld datum}. I shall take $\phi^w$ to be the Drinfeld module defined by
\[\phi^w_T=T+g_1\tau +g_2\tau^2.\] The pair $w$ does not uniquely determine the Drinfeld module $\phi$ up to isomorphism over $F$. Take $\mathcal{C}$ to be the set of Drinfelf data $w=(g_1, g_2)\in A^2$. 
\par Fix a pair of positive integers $\vec{c}=(c_1, c_2)$. Given a polynomial $f$, set $|f|:=q^{\op{deg}(f)}$. Then, the height 
\[|w|:=\op{max}\left\{|g_1|^{\frac{1}{c_1}}, |g_2|^{\frac{1}{c_2}}\right\}.\]
Let $X>0$ be an integer, take $\cC(X)$ to be the set of Drinfeld modules with $|\phi|< q^X$. Then, this is the set of Drinfeld data $w=(g_1, g_2)$ such that  
\[\deg g_1<  c_1 X\text{ and }\deg g_2< c_2 X.\]
Thus, one finds that 
\begin{equation}
    \# \cC(X)=q^{c_1X}(q^{c_2X}-1).
\end{equation}

Let $\mathcal{S}$ be a subset of $\mathcal{C}$ and given a positive integer $X>0$, take $\mathcal{S}(X):=\{w\in \mathcal{S}\mid |w|<X\}$. Refer to the following limit 
\[\mathfrak{d}(\mathcal{S})=\lim_{X\rightarrow \infty} \frac{\# \mathcal{S}(X)}{\# \mathcal{C}(X)}=\frac{\# \mathcal{S}(X)}{q^{c_1X}(q^{c_2X}-1)},\]
(provided it exists) as the \emph{density of $\mathcal{S}$}. The upper and lower densities $\overline{\mathfrak{d}}(\mathcal{S})$ and $\underline{\mathfrak{d}}(\mathcal{S})$ are defined as follows
\[\begin{split}
    &\overline{\mathfrak{d}}(\mathcal{S}):=\limsup_{X\rightarrow \infty} \left(\frac{\# \mathcal{S}(X)}{\# \mathcal{C}(X)}\right);\\
    &\underline{\mathfrak{d}}(\mathcal{S}):=\liminf_{X\rightarrow \infty} \left(\frac{\# \mathcal{S}(X)}{\# \mathcal{C}(X)}\right),
\end{split}\]
and unlike $\mathfrak{d}(\mathcal{S})$, these densities are guaranteed to exist.
One says that $\mathcal{S}$ has positive density if $\underline{\mathfrak{d}}(\mathcal{S})>0$. 

\begin{remark}
    Recall that a pair $w=(g_1, g_2)\in A^2$ with $g_2\neq 0$ is said to be minimal if there is no non-constant polynomial $a$ such that $a^{q-1}$ divides $g_1$ and $a^{q^2-1}$ divides $g_2$. One may instead define $\mathcal{C}_{\op{min}}$ to consist of only minimal pairs $w$, and define the density of a subset $\mathcal{S}$ of $\mathcal{C}_{\op{min}}$ by 
    \[\mathfrak{d}(\mathcal{S}):=\frac{\# \mathcal{S}(X)}{\# \mathcal{C}_{\op{min}}(X)}.\]
    It is possible to adopt this approach instead since it relies on point counting techniques in certain weighted projective spaces, cf. \cite[section 5]{phillips2022counting}. I do not however pursue it in this article since it makes calculations more cumbersome.
\end{remark}

\section{Main results}
\par In this section, the main results of the article are proven. I shall assume that $q\geq 5$ is odd. There are two distinct elements $a_1, a_2\in \F_q^\times$. For $i=1,2$, set $\lambda_i:=(T-a_i)$. I set $v_i$ to denote the valuation at $\lambda_i$, normalized by $v_i(T-a_i)=1$. Let $w=(g_1, g_2)$ be a Drinfeld datum for a rank $2$ Drinfeld module $\phi=\phi^w: A\rightarrow A\{\tau\}$. Throughout this section, I fix an element $\eta\in \F_q^\times$ which is not a square in $\F_q^\times$. This is possible since $q$ is assumed to be odd, and thus, $\F_q^\times$ has even order (and therefore not $2$-divisible). Set $\p:=(T)$ and for ease of notation, take $\rho$ to denote the $T$-adic representation $\hat{\rho}_{\phi^w, \p}: \op{G}_F\rightarrow \op{GL}_2(A_{\p})$. Set \[\bar{\rho}:\op{G}_F\rightarrow \op{GL}_2\left(A/\p\right)\xrightarrow{\sim} \op{GL}_2(\F_q)\] to denote the mod-$\p$ reduction of $\rho$. For $i=1, 2$, set $\op{G}_{\lambda_i}:=\op{Gal}(F_{\lambda_i}^{\op{sep}}/F_{\lambda_i})$ and $\op{I}_i\subset G_{\lambda_i}$ be the inertia group. The Frobenius in $\op{G}_{\lambda_i}/\op{I}_i$ is denoted $\sigma_i$. Likewise, $\op{G}_{\p}:=\op{Gal}(F_{\p}^{\op{sep}}/F_\p)$ and $\op{I}_{\p}$ is the inertia subgroup.

\subsection{Congruence criteria for surjective $T$-adic images}  In this subsection, we prove the following result.
\begin{theorem}\label{main thm 0}
    Let $\eta\in \F_q^\times$ be a non-square, and let $a_1, a_2$ be distinct non-zero elements in $\F_q^\times$. Consider a pair $(g_1, g_2)\in A^2$ with $g_2\neq 0$ such that 
    \begin{enumerate}
        \item $g_1$ is divisible by $(T-a_1)$,
        \item $g_1$ is coprime to $T(T-a_2)$,
        \item $g_2$ is divisible by $(T-a_2)$ and not by $(T-a_2)^2$,
        \item $g_2\equiv -a_1 \eta^{-1}\mod{(T-a_1)}$,
        \item $g_2$ is coprime to $T$.
    \end{enumerate}
    Then, the representation $\rho:\op{G}_F\rightarrow \op{GL}_2(A_{\p})$ is surjective.
\end{theorem}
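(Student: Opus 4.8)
The plan is to deduce surjectivity of $\rho$ from two independent statements: that $\op{det}\rho:\op{G}_F\rightarrow A_\p^\times$ is surjective, and that $\rho(\op{G}_F)\supseteq\op{SL}_2(A_\p)$. Granting both, the determinant sequence $1\rightarrow\op{SL}_2(A_\p)\rightarrow\op{GL}_2(A_\p)\xrightarrow{\op{det}}A_\p^\times\rightarrow1$ forces $\rho(\op{G}_F)=\op{GL}_2(A_\p)$. The three primes $\p=(T)$, $\lambda_1=(T-a_1)$ and $\lambda_2=(T-a_2)$ play complementary roles: $\p$ controls the determinant, $\lambda_2$ supplies a full unipotent subgroup via bad reduction, and $\lambda_1$ supplies an irreducible Frobenius element via its good reduction. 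First I would record the reduction types forced by the hypotheses. By (2) and (5), $g_1$ and $g_2$ are units at $\p$; reducing $\phi^w_T=T+g_1\tau+g_2\tau^2$ modulo $T$ gives $\bar g_1\tau+\bar g_2\tau^2$ with $\bar g_1\neq0$, so the height of the reduction at $\p$ equals $1$ and $\phi^w$ is (ordinary) good at $\p$. Proposition \ref{det surjective prop} then yields that $\op{det}\rho$ is surjective. By (3), $v_2(g_2)=1$ while (1),(2) give $v_2(g_1)=0$, so $\lambda_2$ is a prime of bad stable reduction of reduction rank $1$; by (1),(4), $g_2$ is a unit at $\lambda_1$ while $g_1\equiv0$, so $\phi^w$ has good reduction at $\lambda_1$ and $\sigma_1$ is well defined.

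Next I would prove that $\bar\rho$ is surjective onto $\op{GL}_2(\F_q)$. Since $\phi^w$ has stable reduction of rank $1$ at $\lambda_2$ with $v_2(g_2)=1$ prime to $p$, the function-field analogue of Tate uniformization shows that $\op{I}_2$ acts through a nontrivial unipotent, whose reduction modulo $\p$ is a transvection in $\op{GL}_2(\F_q)$. At $\lambda_1$ the reduced module is $\bar\phi_T=a_1+\bar g_2\tau^2$ with $\bar g_2=-a_1\eta^{-1}$; applying Gekeler's algorithm for characteristic polynomials of rank-$2$ Drinfeld modules over $\F_q$, the Frobenius $\sigma_1$ has characteristic polynomial of the form $X^2+\mu\,(T-a_1)$ with $\mu\in\F_q^\times$ determined by $\bar g_2$, whose reduction modulo $\p$ is $X^2-\mu a_1$. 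The congruence (4) is engineered precisely so that $\mu a_1$ becomes the fixed non-square $\eta$, making $X^2-\eta$ irreducible over $\F_q$; hence $\bar\rho(\sigma_1)$ is a regular element of a non-split torus, and in particular $\bar\rho$ is irreducible. By the classical classification of subgroups of $\op{GL}_2(\F_q)$ that contain a transvection and act irreducibly, together with the surjectivity of the determinant onto $\F_q^\times$ to exclude subfield subgroups, one concludes $\bar\rho(\op{G}_F)\supseteq\op{SL}_2(\F_q)$, so $\bar\rho$ is surjective.

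Finally I would upgrade to the full group. Because $v_2(g_2)=1$ is a unit at $\p$, the inertia image $\rho(\op{I}_2)$ is not merely a transvection but the entire upper unipotent subgroup $\left\{\left(\begin{smallmatrix}1&x\\0&1\end{smallmatrix}\right):x\in A_\p\right\}$ in a suitable basis. Conjugating it by the Weyl element $\left(\begin{smallmatrix}0&-1\\1&0\end{smallmatrix}\right)\in\op{SL}_2(\F_q)\subseteq\rho(\op{G}_F)$ produces the lower unipotent subgroup as well, and since $A_\p\cong\F_q\lb T\rb$ is local, $\op{SL}_2(A_\p)$ is generated by these two elementary subgroups. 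Combined with the surjectivity of $\op{det}\rho$, this gives $\rho(\op{G}_F)=\op{GL}_2(A_\p)$.

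The main obstacles are both local-at-$\lambda_2$ and mod-$\p$. First, verifying that the inertia image at $\lambda_2$ reduces to a nonzero transvection, and more strongly that it is the \emph{full} unipotent copy of $A_\p$, requires a careful analysis of the stable reduction and the associated ramification filtration, using crucially that $v_2(g_2)=1$ is prime to $p$; this is the technical heart of the argument. Second, the Dickson-type classification must be handled with care when $p\in\{3,5\}$, since the exceptional projective subgroups $A_4,S_4,A_5$ and the subfield subgroups $\op{SL}_2(\F_{q_0})$ may contain elements of order $p$; here one leans on the non-split structure of $\bar\rho(\sigma_1)$ and on the surjectivity of the determinant onto all of $\F_q^\times$ to exclude them. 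Pinning down the exact relation between the congruence (4), Gekeler's value of $\mu$, and the non-squareness of $\eta$ is the remaining computation to be carried out in full.
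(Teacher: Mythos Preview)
Your strategy coincides with the paper's almost step for step: both use the ordinary good reduction at $\p$ together with Proposition~\ref{det surjective prop} to make $\det\rho$ surjective; both run Gekeler's algorithm at $\lambda_1$ to obtain a Frobenius whose mod-$\p$ characteristic polynomial is $X^2-\eta$, forcing irreducibility of $\bar\rho$; and both extract a full unipotent copy of $A/\mathfrak{a}$ from the inertia at $\lambda_2$ (this is exactly Lemma~\ref{unipotent lemma}, which cites Zywina, so the ``technical heart'' you flag is already packaged). For the mod-$\p$ step the paper invokes Lemma~\ref{G contains SL_2 lemma}: any subgroup of $\op{GL}_2(\F_q)$ that acts irreducibly and contains a subgroup of order $q$ already contains $\op{SL}_2(\F_q)$. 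This bypasses the Dickson case analysis and the subfield/exceptional subgroup worries you raise; the determinant is not needed at this stage.

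The one genuine gap is in your upgrade from $\bar\rho$ to $\rho$. You assert ``the Weyl element $\left(\begin{smallmatrix}0&-1\\1&0\end{smallmatrix}\right)\in\op{SL}_2(\F_q)\subseteq\rho(\op{G}_F)$'' and conjugate $U^+$ by it to produce $U^-$. But you have only shown $\op{SL}_2(\F_q)\subseteq\bar\rho(\op{G}_F)$; all you know is that some \emph{lift} $w\in\rho(\op{G}_F)$ reduces to the Weyl element modulo $\p$, not that the constant matrix itself lies in the image. Conjugating $U^+$ by such a $w$ yields a group whose reduction is $\bar U^-$, but which need not equal $U^-$ on the nose, so the identity $\op{SL}_2(A_\p)=\langle U^+,U^-\rangle$ cannot be applied directly. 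The paper avoids this circularity by invoking the Pink--R\"utsche criterion (Proposition~\ref{PR prop}): the presence of the full $U^+$ already gives a non-scalar class in $\mathcal{H}^{[1]}$ (namely that of $\left(\begin{smallmatrix}1&\pi\\0&1\end{smallmatrix}\right)$), and combined with $\mathcal{H}^{[0]}=\op{GL}_2(\F_q)$ and $\det(\mathcal{H})=A_\p^\times$ this forces $\mathcal{H}=\op{GL}_2(A_\p)$. Your argument can be repaired---the induction underlying Pink--R\"utsche is exactly the irreducibility of $\mathfrak{sl}_2(\F_q)$ under $\op{GL}_2(\F_q)$-conjugation for odd $q$---but as written the Weyl-conjugation step assumes what it is meant to prove.
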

I prove some preparatory results. Let $\pi$ be the uniformizer of $A_\p$ at $\p$. The congruence filtration on $\op{GL}_2(A_\p)$ is defined by 
\[\begin{split}
    & G_\p^0:=\op{GL}_2(A_\p) \text{ for }i=0;\\
     & G_\p^i:=\op{Id}+\pi^i\op{M}_2(A_\p)  \text{ for }i>0;
\end{split}\]
where $\op{M}_2(A_\p)$ denotes the $2\times 2$ matrices with entries in $A_\p$. Let $\mathcal{H}$ be a closed subgroup of $\op{GL}_2(A_\p)$, take $\mathcal{H}^i:=\mathcal{H}\cap G_\p^i$, and 
\[\mathcal{H}^{[i]}:=\mathcal{H}^i/\mathcal{H}^{i+1}.\]

\begin{proposition}\label{PR prop}
    Let $\mathcal{H}$ be a closed subgroup of $\op{GL}_2(A_\p)$. Assume that the following conditions hold
    \begin{enumerate}
        \item $q\geq 4$, 
        \item $\op{det}(\mathcal{H})=A_\p^\times$,
        \item $\mathcal{H}^{[0]}=\op{GL}_2(\F_q)$, 
        \item $\mathcal{H}^{[1]}$ contains a non-scalar matrix.
    \end{enumerate}
    Then one has that
    \[\mathcal{H}=\op{GL}_2(A_\p).\]
\end{proposition}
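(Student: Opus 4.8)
The plan is to first reduce the statement to showing that $\op{SL}_2(A_\p)\subseteq\mathcal{H}$, and then use the determinant hypothesis to upgrade this to equality. The reduction is short: granting $\op{SL}_2(A_\p)\subseteq \mathcal{H}$, take any $g\in\op{GL}_2(A_\p)$; since $\op{det}(\mathcal{H})=A_\p^\times$ by (2), choose $h\in\mathcal{H}$ with $\op{det}(h)=\op{det}(g)$, so that $gh^{-1}\in\op{SL}_2(A_\p)\subseteq\mathcal{H}$ and hence $g=(gh^{-1})h\in\mathcal{H}$. Thus the entire content is to prove $\op{SL}_2(A_\p)\subseteq\mathcal{H}$, and for this the determinant condition (2) plays no role — only (1), (3) and (4) are needed.

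To produce $\op{SL}_2(A_\p)$, I set $\mathcal{K}:=\mathcal{H}\cap\op{SL}_2(A_\p)$, a closed subgroup, and study it through the congruence filtration. Writing $\op{gr}^{\,0}=\op{SL}_2(\F_q)$ and $\op{gr}^{\,i}=\mathfrak{sl}_2(\F_q)$ for $i\geq 1$ for the graded pieces of $\op{SL}_2(A_\p)$, the key structural facts are that the conjugation action of $\op{gr}^{\,0}$ on $\op{gr}^{\,i}$ is the adjoint action $\op{Ad}$, and that the group commutator induces the matrix Lie bracket $\op{gr}^{\,i}\times\op{gr}^{\,j}\to\op{gr}^{\,i+j}$, $(M,N)\mapsto MN-NM$, on graded pieces. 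I would prove that $\mathcal{K}^{[i]}$ equals the full graded piece for every $i\geq 0$; a standard successive-approximation argument then shows $\mathcal{K}$ surjects onto $\op{SL}_2(A/\p^{n})$ for all $n$, and closedness gives $\mathcal{K}=\op{SL}_2(A_\p)$.

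The three inputs are as follows. First, $\mathcal{K}^{[0]}=\op{SL}_2(\F_q)$: since commutators have determinant $1$, the derived subgroup $[\mathcal{H},\mathcal{H}]$ lies in $\mathcal{K}$, and its image modulo $\pi$ is $[\op{GL}_2(\F_q),\op{GL}_2(\F_q)]=\op{SL}_2(\F_q)$, the last equality being exactly where hypothesis (1), $q\geq 4$, enters (perfectness of $\op{SL}_2(\F_q)$). Second, $\mathcal{K}^{[1]}=\mathfrak{sl}_2(\F_q)$: by (4) there is $h\in\mathcal{H}\cap G_\p^1$ whose level-one symbol $N\in\op{M}_2(\F_q)$ is non-scalar; commutators $[g,h]$ with lifts $g$ of elements of $\op{GL}_2(\F_q)$ lie in $\mathcal{K}\cap G_\p^1$ and have symbols $\op{Ad}(\bar g)N-N\in\mathfrak{sl}_2(\F_q)$, so $\mathcal{K}^{[1]}$ is a nonzero $\op{Ad}(\op{GL}_2(\F_q))$-stable additive subgroup of $\mathfrak{sl}_2(\F_q)$. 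Using the diagonal torus (whose characters on the three root spaces are distinct for $q>3$, and which scales each root line through all of $\F_q^\times$) one sees such a subgroup is $\F_q$-rational, and the irreducibility of the adjoint representation in odd characteristic forces $\mathcal{K}^{[1]}=\mathfrak{sl}_2(\F_q)$. Third, I climb: since $[\mathfrak{sl}_2(\F_q),\mathfrak{sl}_2(\F_q)]=\mathfrak{sl}_2(\F_q)$ when the characteristic is odd, the bracket $[\mathcal{K}^{[1]},\mathcal{K}^{[i]}]\subseteq\mathcal{K}^{[i+1]}$ propagates fullness, giving $\mathcal{K}^{[i]}=\mathfrak{sl}_2(\F_q)$ for all $i\geq 1$ by induction.

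The main obstacle I anticipate is the second input, the identification of $\mathcal{K}^{[1]}$. The delicate point is that the graded pieces $\mathcal{H}^{[i]}$ are a priori only $\F_p$-subspaces of $\op{M}_2(\F_q)$, so the irreducibility of $\mathfrak{sl}_2(\F_q)$ must be applied with care: one must first promote the relevant $\op{Ad}$-stable additive subgroup to an $\F_q$-subspace using the torus action before invoking irreducibility, and one must check the adjoint representation is genuinely irreducible in odd characteristic (this, together with $[\mathfrak{sl}_2(\F_q),\mathfrak{sl}_2(\F_q)]=\mathfrak{sl}_2(\F_q)$ and the perfectness of $\op{SL}_2(\F_q)$, is precisely why $q$ is taken odd and $q\geq 4$). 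Hypothesis (4) is essential here: without it $\mathcal{H}$ could be, for instance, the product of $\op{GL}_2(\F_q)$ with the scalars $A_\p^\times$, which satisfies (1)--(3) yet has $\mathcal{H}^{[1]}$ consisting only of scalar symbols, so the commutator argument would produce nothing. By contrast, the remaining steps — the reduction via the determinant, the perfectness argument, the Lie-bracket climb, and the final passage to the limit — are routine once $\mathcal{K}^{[1]}$ is under control.
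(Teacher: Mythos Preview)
The paper does not give a proof here; it simply cites \cite[Proposition 4.1]{pinkrutsche}. Your outline is precisely the Pink--R\"utsche argument: reduce to $\op{SL}_2(A_\p)\subseteq\mathcal{H}$ via the determinant, set $\mathcal{K}=\mathcal{H}\cap\op{SL}_2(A_\p)$, identify each graded piece $\mathcal{K}^{[i]}$ with the full one using perfectness of $\op{SL}_2(\F_q)$, the adjoint action on $\mathfrak{sl}_2(\F_q)$, and the Lie bracket, and pass to the limit by successive approximation. So you have reconstructed the cited proof rather than produced an alternative.

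One caveat worth flagging: hypothesis (1) as stated allows $q$ to be a power of $2$, but your climbing step $[\mathfrak{sl}_2(\F_q),\mathfrak{sl}_2(\F_q)]=\mathfrak{sl}_2(\F_q)$ and the irreducibility of the adjoint module both fail in characteristic $2$ (the scalar matrices then lie inside $\mathfrak{sl}_2$ and form a proper ideal, and the derived subalgebra collapses). Pink and R\"utsche handle both parities with a uniform argument, but since the present paper globally assumes $q$ odd and $q\geq 5$, your odd-characteristic version is entirely adequate for every use made of the proposition here.
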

\begin{proof}
    This result is \cite[Proposition 4.1]{pinkrutsche}.
\end{proof}

\begin{lemma}\label{unipotent lemma}
Let $\phi: A\rightarrow F\{\tau\}$ be a Drinfeld module of rank $2$ associated to a pair $(g_1, g_2)\in A^2$, i.e.,
\begin{equation}\label{inertia image conditions}\phi_T=T+g_1\tau+g_2\tau^2.\end{equation}Let $\lambda=(a)$ be a non-zero prime ideal of $A$ and $v$ be the associated valuation normalized by setting $v(a)=1$. Take $\op{I}_v$ to be the inertia group of $\op{G}_{F_v}$. Assume that \[v(g_1)=0\text{ and }v(g_2)=1.\] Let $\mathfrak{a}$ be a proper ideal of $A$. Then there is a basis of $\phi[\mathfrak{a}]$ such that $\rho_{\phi, \mathfrak{a}}(\op{I}_v)$ contains the full group of unipotent upper triangular matrices $\mtx{1}{\ast}{0}{1}$ in $\op{GL}_2(A/\mathfrak{a})$.
\end{lemma}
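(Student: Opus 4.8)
The plan is to exploit the fact that the hypotheses $v(g_1)=0$ and $v(g_2)=1$ force $\phi$ to have \emph{stable reduction of rank $1$} at $\lambda$: reducing modulo $\lambda$ annihilates $g_2$ but not $g_1$, so $\bar\phi_T=\gamma(T)+\bar g_1\tau$ is a Drinfeld module of rank $1$ over $k_\lambda$. This is the Drinfeld analogue of multiplicative reduction for elliptic curves, and I would invoke the Tate--Drinfeld (analytic) uniformization at the finite place $v$ (as in \cite{papibook}): over $F_v$ there is a rank-$1$ Drinfeld module $\psi$ with good reduction and a rank-$1$ $\psi$-lattice $\Lambda=A\omega\subset F_v^{\op{sep}}$, together with an $A$-equivariant map $e_\Lambda\colon(\mathbb{C}_v,\psi)\to(\mathbb{C}_v,\phi)$ with $\ker e_\Lambda=\Lambda$. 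I will treat $\mathfrak{a}$ coprime to $\lambda$, which is the only case needed later (in Theorem~\ref{main thm 0} one has $\lambda=(T-a_i)$ and $\mathfrak{a}=\p^n$). In that case the uniformization yields a short exact sequence of $\op{G}_{F_v}$-modules
\[0\to \psi[\mathfrak{a}]\to \phi[\mathfrak{a}]\to \mathfrak{a}^{-1}\Lambda/\Lambda\to 0,\]
with $\psi[\mathfrak{a}]\cong A/\mathfrak{a}$ and $\mathfrak{a}^{-1}\Lambda/\Lambda\cong A/\mathfrak{a}$.

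Next I would analyze the inertia action through this filtration. The submodule $\psi[\mathfrak{a}]$ is \emph{unramified} at $v$, since $\psi$ has good reduction and $\mathfrak{a}$ is prime to $\lambda$, so $\op{I}_v$ acts trivially on it; on the quotient $\op{I}_v$ acts through the character $\sigma\mapsto\sigma(\omega)/\omega$ valued in $A^\times=\F_q^\times$, which is \emph{tame}. Choosing a basis $\{e_1,e_2\}$ adapted to the filtration, with $e_1$ spanning the unramified line, thus places $\rho_{\phi,\mathfrak{a}}(\op{I}_v)$ inside the matrices $\mtx{1}{c(\sigma)}{0}{u(\sigma)}$ with $u(\sigma)\in\F_q^\times$. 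Because $\F_q^\times$ has order prime to $p$, the wild inertia subgroup (which is pro-$p$) maps into the unipotent group $\mtx{1}{\ast}{0}{1}\cong A/\mathfrak{a}$, and there the cocycle $c$ restricts to a homomorphism. Hence it suffices to show that the image of wild inertia is \emph{all} of $\mtx{1}{\ast}{0}{1}$; equivalently, that the wild ramification index of $F_v(\phi[\mathfrak{a}])/F_v$ equals $|\mathfrak{a}|=q^{\deg\mathfrak{a}}$, the full order of this group.

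The decisive and final step is this ramification computation, and it is exactly where $v(g_2)=1$ enters. Writing $m$ for the monic generator of $\mathfrak{a}$ and $k=\deg\mathfrak{a}$, the leading coefficient of $\phi_m$ is $g_2^{(q^{2k}-1)/(q^2-1)}$, of valuation $v(g_2)\cdot\tfrac{q^{2k}-1}{q^2-1}\equiv v(g_2)\pmod p$, which is a $p$-adic unit precisely because $v(g_2)=1$. Feeding this into the Newton polygon of $\phi_m(x)$ — equivalently, a direct computation pinning down the period by $v(\omega)=-v(g_2)/(q-1)$ — the division points generating the ramified part acquire valuations of the shape $-q^{\,j-k}v(g_2)/(q-1)$ for $0\le j\le k-1$, the deepest of which (at $j=0$) is $-v(g_2)/\bigl(q^{k}(q-1)\bigr)$, a fraction whose denominator has $p$-part exactly $q^{k}$. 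Thus $F_v(\phi[\mathfrak{a}])$ contains an element whose valuation has denominator divisible by $q^{k}=|\mathfrak{a}|$, forcing the wild ramification index to be at least $|\mathfrak{a}|$; since wild inertia injects into $A/\mathfrak{a}$ of order $|\mathfrak{a}|$, the image must be the entire unipotent group, which proves the lemma. The main obstacle is precisely controlling these valuations and checking that dividing by the full $p$-power $q^{k}$ does not shrink the $p$-part of the period's denominator — this is the wild-ramification incarnation of the tame phenomenon that a Tate parameter of valuation $1$ maximally ramifies the torsion fields, and it is here that $g_2$ being a \emph{unit} at $\lambda$, rather than merely nonzero, is used in an essential way.
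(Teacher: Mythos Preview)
Your approach is essentially the one the paper invokes: the paper observes that $v(g_1)=0$, $v(g_2)=1$ give stable reduction of rank $1$ with $v(j(\phi))=-1$ and then cites \cite[Proposition~4.1]{zywina2011drinfeld}, whereas you have sketched out the content of that proposition (Tate--Drinfeld uniformization, the filtration $0\to\psi[\mathfrak a]\to\phi[\mathfrak a]\to\mathfrak a^{-1}\Lambda/\Lambda\to 0$, tameness on the graded pieces, and the Newton-polygon/valuation computation forcing wild inertia to fill the unipotent group). Your restriction to $\mathfrak a$ coprime to $\lambda$ is harmless for the applications in the paper and matches Zywina's hypotheses.

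One correction of wording: in your final sentence you say the essential point is ``$g_2$ being a \emph{unit} at $\lambda$.'' That is the opposite of the hypothesis---$v(g_2)=1$, so $g_2$ is a uniformizer, not a unit. What you actually use (and stated correctly a few lines earlier) is that $v(g_2)$, hence the valuation $(q^{2k}-1)/(q^2-1)\cdot v(g_2)$ of the leading coefficient of $\phi_m$, is a $p$-adic unit; equivalently $-v(j(\phi))=1$ is coprime to $p$. If instead $p\mid v(g_2)$ the $p$-part of the denominator would collapse and the wild inertia image could be strictly smaller than $A/\mathfrak a$. So the key input is $v(g_2)=1$ (or more generally $p\nmid v(j)$), not that $g_2$ is a unit.
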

\begin{proof}
  It follows from the conditions \eqref{inertia image conditions} that $\phi$ has stable reduction at $v$ with reduction rank equal to $1$. Moreover, the valuation of the $j$-invariant is $-1$. The result then follows from \cite[Proposition 4.1]{zywina2011drinfeld}\footnote{I am happy to reproduce the details here if the referee insists.}.
\end{proof}

\begin{lemma}\label{G contains SL_2 lemma}
    Let $\F$ be a finite field and $G$ be a subgroup of $\op{GL}_2(\F)$ such that 
    \begin{enumerate}
        \item $G$ contains a subgroup of order $\# \F$, 
        \item $G$ acts irreducibly.
    \end{enumerate}
    Then, $G$ must contain $\op{SL}_2(\F)$. 
\end{lemma}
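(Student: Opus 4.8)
The plan is to use the hypothesis that $G$ contains a subgroup $U$ of order exactly $\#\F$ to pin down a full unipotent subgroup, and then to exploit irreducibility to produce a second one fixing a different line, so that together they generate $\op{SL}_2(\F)$.

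Write $q=\#\F=p^n$. First I would observe that $U$ is a $p$-group, and since the $p$-part of $|\op{GL}_2(\F)|=q(q-1)^2(q+1)$ equals $q$, the subgroup $U$ is a full Sylow $p$-subgroup of $\op{GL}_2(\F)$. Every element of $U$ has $p$-power order, hence, in characteristic $p$, satisfies $(u-1)^{p^k}=0$ for some $k$ and is therefore unipotent. To locate a common fixed line I would let $U$ act on $\mathbb{P}^1(\F)$, which has $q+1\equiv 1\pmod p$ points; since a $p$-group acting on a finite set fixes a number of points congruent to the cardinality modulo $p$, the group $U$ fixes at least one line. After conjugating we may assume this line is $\langle e_1\rangle$, so that $U$ lies in the upper-triangular unipotent subgroup $U_+=\{\mtx{1}{t}{0}{1}\mid t\in\F\}$; as $|U_+|=q=|U|$, in fact $U=U_+$. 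This step is where hypothesis (1) is essential: it is precisely the assumption of order equal to $\#\F$, rather than merely divisible by $p$, that forces $U$ to be the entire unipotent radical, thereby excluding the subfield subgroups $\op{SL}_2(\F')$, which are irreducible but whose Sylow $p$-subgroups are too small.

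Next I would use irreducibility. The group $U_+$ fixes the unique line $\langle e_1\rangle$ in $\mathbb{P}^1(\F)$, so if $G$ fixed $\langle e_1\rangle$ it would be reducible; hence there is $g\in G$ with $g\langle e_1\rangle\neq\langle e_1\rangle$, and $U':=gU_+g^{-1}\subseteq G$ is a unipotent subgroup of order $q$ fixing the distinct line $L_1:=g\langle e_1\rangle$. Choosing $v=(x,1)^{\mathsf T}$ spanning $L_1$, where the second coordinate is nonzero since $L_1\neq\langle e_1\rangle$, and setting $m=\mtx{1}{-x}{0}{1}\in U_+\subseteq G$, one has $m L_1=\langle e_2\rangle$, so $mU'm^{-1}\subseteq G$ is a unipotent subgroup of order $q$ fixing $\langle e_2\rangle$; comparing orders, $mU'm^{-1}$ equals the lower-triangular unipotent subgroup $U_-=\{\mtx{1}{0}{s}{1}\mid s\in\F\}$.

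Finally, since both $U_+$ and $U_-$ lie in $G$, I would conclude by the standard fact that $\op{SL}_2$ of a field is generated by its elementary transvections, giving $\op{SL}_2(\F)=\langle U_+,U_-\rangle\subseteq G$. The only genuinely delicate point is the second paragraph: one must verify that the order condition forces $U$ to be the full unipotent radical rather than a proper unipotent subgroup, and that conjugation by an element of $U_+$ itself simultaneously preserves $U_+$ and carries the second fixed line into lower-triangular position. Both are straightforward once the fixed-line analysis is in place, and I expect no serious obstacle beyond this bookkeeping.
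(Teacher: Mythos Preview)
Your argument is correct. The paper does not actually prove this lemma; it simply records it as \cite[Lemma A.1]{zywina2011drinfeld} and moves on. What you have written is essentially the standard proof underlying that citation: the subgroup $U$ of order $q$ is a full Sylow $p$-subgroup of $\op{GL}_2(\F)$, the fixed-point count on $\mathbb{P}^1(\F)$ forces it (after conjugation) to coincide with the full upper unipotent group $U_+$, irreducibility supplies a $G$-conjugate fixing a second line, and an element of $U_+\subseteq G$ moves that conjugate to $U_-$, whence $\langle U_+,U_-\rangle=\op{SL}_2(\F)\subseteq G$. Compared with the paper's bare citation, your version is self-contained and makes explicit the one genuinely sensitive point---that the hypothesis $|U|=\#\F$, and not merely $p\mid |G|$, is what rules out the irreducible subfield subgroups $\op{SL}_2(\F')$ with $\F'\subsetneq\F$. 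The bookkeeping you flag (that the conjugating element $m$ can be chosen in $U_+$ and hence in $G$) is handled correctly.
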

\begin{proof}
    This result is \cite[Lemma A.1]{zywina2011drinfeld}.
\end{proof}

\begin{proposition}\label{main prop}
    Let $\phi$ be a Drinfeld module associated to a pair $(g_1, g_2)\in A^2$ such that 
    \begin{enumerate}
        \item $g_1$ is divisible by $(T-a_1)$,
        \item $g_1$ is coprime to $(T-a_2)$,
        \item $g_2$ is divisible by $(T-a_2)$ and not by $(T-a_2)^2$,
        \item $g_2\equiv -a_1 \eta^{-1}\mod{(T-a_1)}$.
    \end{enumerate}
    Then, the image of the representation \[\bar{\rho}:\op{G}_F\rightarrow \op{GL}_2(\F_q)\] contains $\op{SL}_2(\F_q)$.
\end{proposition}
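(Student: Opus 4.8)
The plan is to apply Lemma \ref{G contains SL_2 lemma} to the image $G := \bar{\rho}(\op{G}_F)\subseteq \op{GL}_2(\F_q)$. It therefore suffices to exhibit a subgroup of order $q=\#\F_q$ inside $G$ and to show that $G$ acts irreducibly on $\F_q^2$; the two inputs will come from the inertia at $\lambda_2$ and from the Frobenius at $\lambda_1$, respectively.

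For the order-$q$ subgroup I would localize at $\lambda_2=(T-a_2)$. Condition (2) gives $v_2(g_1)=0$ and condition (3) gives $v_2(g_2)=1$, which are precisely the hypotheses of Lemma \ref{unipotent lemma} at the valuation $v_2$. Applying that lemma with the proper ideal $\mathfrak{a}=\p=(T)$ (so that $\rho_{\phi,\mathfrak{a}}=\bar{\rho}$) produces a basis of $\phi[\p]$ for which $\bar{\rho}(\op{I}_2)$ contains every unipotent upper-triangular matrix $\mtx{1}{\ast}{0}{1}$ in $\op{GL}_2(\F_q)$. These form a subgroup isomorphic to $(\F_q,+)$, of order exactly $q$, verifying the first hypothesis of Lemma \ref{G contains SL_2 lemma}.

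For irreducibility I would use the Frobenius at $\lambda_1=(T-a_1)$. Conditions (1) and (4) pin down the reduction: since $(T-a_1)\mid g_1$ we have $\bar{g}_1=0$, while $\bar{g}_2\equiv -a_1\eta^{-1}$ and $\gamma(T)=a_1$, so the reduced module is $\bar{\phi}_T=a_1-a_1\eta^{-1}\tau^2$, which has good (indeed supersingular) reduction of rank $2$, and $\bar{\rho}$ is unramified at $\lambda_1$. Because the $\tau$-coefficient vanishes, the $q$-power Frobenius $\pi=\tau$ already lies in the image of $\bar{\phi}$: rearranging $\bar{\phi}_T$ gives $\pi^2=\tau^2=\bar{\phi}_{\eta-\eta a_1^{-1}T}$, so the characteristic polynomial of $\pi$ (computed, e.g., via Gekeler's algorithm) is $X^2-(\eta-\eta a_1^{-1}T)=X^2+\eta a_1^{-1}(T-a_1)$, with vanishing trace and constant term $\mu(T-a_1)$ for $\mu=\eta a_1^{-1}\in\F_q^\times$. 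By the standard compatibility (valid since $\lambda_1\neq\p$ is a prime of good reduction), the characteristic polynomial of $\bar{\rho}(\sigma_1)$ is this polynomial reduced modulo $\p=(T)$, namely $X^2-\eta$. Since $\eta$ is a non-square in $\F_q^\times$, this is irreducible over $\F_q$, so $\bar{\rho}(\sigma_1)$ has no eigenvalue in $\F_q$ and fixes no $\F_q$-rational line in $\F_q^2$. Any $\op{G}_F$-stable line would in particular be $\sigma_1$-stable, which is impossible, so $G$ acts irreducibly.

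With both hypotheses established, Lemma \ref{G contains SL_2 lemma} yields $\op{SL}_2(\F_q)\subseteq G$, as claimed. I expect the main obstacle to be the Frobenius computation at $\lambda_1$: one must correctly identify the characteristic polynomial of the reduced module and track its reduction modulo $\p$. The supersingular shape of $\bar{\phi}$ forced by conditions (1) and (4) is exactly what makes this tractable, since it collapses the trace to zero and leaves a constant term which, modulo $\p$, becomes $-\eta$. The delicate point is ensuring all normalizations (the monic generator of $\lambda_1$, the value of $\mu$, and the identification $\pi=\tau$) are consistent, so that the reduced polynomial is genuinely $X^2-\eta$ with a non-square constant and hence remains irreducible.
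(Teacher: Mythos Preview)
Your proposal is correct and follows essentially the same route as the paper: both apply Lemma~\ref{G contains SL_2 lemma} by obtaining the order-$q$ subgroup from Lemma~\ref{unipotent lemma} at $\lambda_2$ and establishing irreducibility via the Frobenius at $\lambda_1$, whose characteristic polynomial reduces modulo $\p$ to $X^2-\eta$ (the paper phrases this last step as a contradiction with an upper-triangular form rather than as irreducibility of $X^2-\eta$, and quotes Gekeler's formulas $a=-\bar g_2^{-1}\bar g_1$, $b=-\bar g_2^{-1}(T-a_1)$ directly instead of reading them off the supersingular relation $\tau^2=\bar\phi_{\eta-\eta a_1^{-1}T}$, but these are cosmetic differences).
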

\begin{proof}
    Let $G$ be the image of $\bar{\rho}$ and let $\bar{V}:=\phi[\p]\simeq \F_q^2$ be the underlying space on which $G$ acts. 
    \par First, I show that $G$ acts irreducibly on $\bar{V}$. In other words, there is no $\F_q$-basis of $\bar{V}$ with respect to which $G$ consists only of upper triangular matrices $\mtx{\ast}{\ast}{0}{\ast}$. Since $g_2\equiv -a_1\eta^{-1}\mod{(T-a_1)}$, and both $a_1$ and $\eta$ are non-zero constants in $\F_q$, one finds that $(T-a_1)$ does not divide $g_2$. As a consequence, $\phi$ has good reduction at $\lambda_1$. Recall that 
    \[\rho:\op{G}_F\rightarrow \op{GL}_2(A_\p)\] is the Galois representation on the $T$-adic Tate-module $T_\p(\phi)$. This representation is unramified at $\lambda_1$, and thus there is a well defined element $\rho(\sigma_1)\in \op{GL}_2(A_\p)$. Let $a, b\in A_\p$ be coefficients of the characteristic polynomial of $\rho(\sigma_1)$, i.e., 
    \[a:=\op{trace}\rho(\sigma_1)\text{ and }b:=\op{det}\rho(\sigma_1).\]
    The elements $a, b$ are in the image of the natural embedding $A\hookrightarrow A_\p$, and can thus be identified with elements in $A$ (cf. \cite[Section 4.2, p.236]{papibook}). These elements $a,b$ depend only on $\lambda_1$ and not on the prime $\p$. Moreover, it follows from \cite[Theorem 4.2.7, (2)]{papibook} that 
    \[\op{deg}_T(a)=0\text{ and }\op{deg}_T(b)\leq 1.\]
    I compute $a$ and $b$ using Gekeler's algorithm for Drinfeld modules of rank $2$, cf. \cite[p.248, l.-2]{papibook}. This this context, Gekeler's result simply states that 
    \[\begin{split}
       & a= -\bar{g}_2^{-1} \bar{g}_1 \mod{\lambda_1};\\
       & b=-\bar{g}_2^{-1}(T-a_1),
    \end{split}\]
    where $\bar{g}_i:=g_i\mod{\lambda_1}$ is a constant in $\F_q$. Note that the first of the above equations determines $a$, since $a$ is a constant, and $\lambda_1$ has degree $1$. I warn the reader that $a$ and $b$ are computed only with reference to $\lambda_1$ and the above computation has no dependence on $\p=(T)$. After computing $a$ and $b$, I shall reduce them modulo $(T)$. Since $g_1$ is divisible by $(T-a_1)$, one finds that $\bar{g}_1=0$. On the other hand, $\bar{g}_2=-a_1\eta^{-1}$. Hence, one finds that
    \[a=0\text{ and } b=\eta (a_1^{-1}T-1).\]

     Let $\bar{a}, \bar{b}\in \F_q$ denote the mod-$T$ reductions of $a$ and $b$ respectively. Then, one has that 
    \[\bar{a}:=\op{trace}\bar{\rho}(\sigma_1)\text{ and }\bar{b}:=\op{det}\bar{\rho}(\sigma_1),\] where I remind the reader that $\bar{\rho}$ is the mod-$T$ reduction of $\rho$.
    Note that $\bar{a}=0$ and $\bar{b}=-\eta$. Thus, the trace and determinant of $\bar{\rho}(\sigma_1)$ are $0$ and $-\eta$ respectively. Suppose that by way of contradiction that there is an $\F_q$-basis of $\bar{V}$ with respect to which $\bar{\rho}(\sigma_1)$ is upper triangular. Then, since the trace of this matrix is $0$, one has that 
    \[\bar{\rho}(\sigma_1)=\mtx{c}{d}{0}{-c},\] and the determinant is $-c^2=-\eta$. Thus, $\eta$ is a square in $\F_q^\times$, which is a contradiction. Therefore, $G$ acts on $\bar{V}$ irreducibly. 
    \par In order to complete the proof, note that the assumptions imply that $v_2(g_1)=0$ and $v_2(g_2)=1$. Taking $\mathfrak{a}:=\p$ and $\lambda:=\lambda_2$ in Lemma \ref{unipotent lemma}, one finds that the full group of unipotent upper triangular matrices is in $G$. Thus in particular, $G$ contains a subgroup with $q$ elements. I have shown that the hypotheses of Lemma \ref{G contains SL_2 lemma} are satisfied, and thus, $G$ contains $\op{SL}_2(\F_q)$.
\end{proof}
 I now give the proof of Theorem \ref{main thm 0}.
\begin{proof}[Proof of Theorem \ref{main thm 0}]
    Let $\bar{\rho}$ be the mod-$\p$ reduction of $\rho$. It follows from Proposition \ref{main prop} that the image of $\bar{\rho}$ contains $\op{SL}_2(\F_q)$. It is assumed that $g_2$ is coprime to $T$, and hence, $\phi$ has good reduction at $T$. Since $g_1$ is coprime to $T$, the height of the reduction of $\phi$ at $\p$ is $1$. It then follows from Proposition \ref{det surjective prop} that $\op{det}\rho$ is surjective. Let $\rho_2$ denote the reduction of $\rho$ modulo $\p^2$, and let $\mathcal{G}\subseteq \op{GL}_2(A/\p^2)$ be its image. In order to complete the proof, it suffices to show that $\mathcal{G}$ contains a non-scalar matrix that is identity modulo $\p$. Applying Lemma \ref{unipotent lemma} to $\lambda=\lambda_2$ and $\mathfrak{a}:=\p^2$, one finds that $\mathcal{G}$ contains the full upper triangular unipotent subgroup of $\op{GL}_2(A/\mathfrak{p}^2)$ (up to conjugation). Thus in particular, it contains a nontrivial unipotent matrix that reduces to the identity modulo-$\p$. Now set $\mathcal{H}$ to denote the image of $\rho$. I have shown that the conditions of Proposition \ref{PR prop} are satisfied for $\mathcal{H}$, and thus, I conclude from it that $\mathcal{H}=\op{GL}_2(A_\p)$. 
\end{proof}

\subsection{Proof of the main result}
In this short subsection, we give a proof of the main result from the Introduction.
\begin{proof}[Proof of Theorem \ref{main thm}]
    Fix an element $\eta\in \F_q^\times$ which is not contained in $(\F_q^\times)^2$. Let $w=(g_1, g_2)\in A^2$ be a pair satisfying the congruence conditions of Theorem \ref{main thm 0}, then the $T$-adic Galois representation associated to $\phi^w$ is surjective. In other words, one requires that $g_1=b_1\mod{T(T-a_1)(T-a_2)}$ where $b_1$ is divisible by $(T-a_1)$, but is coprime to $T(T-a_2)$, and that $g_2=b_2\mod{T(T-a_1)(T-a_2)}$ where $b_2$ is divisible by $(T-a_2)$ but not by $(T-a_2)^2$, is coprime to $T$ and $b_2\equiv -a_1\eta^{-1} \mod{(T-a_1)}$. Fix one such pair $(b_1, b_2)$ represented by $b_i$ with 
    \[\op{deg}_T b_1<3\text{ and }\op{deg}_T b_2<4.\] Let $\mathcal{S}$ be the set of all pairs $w=(g_1, g_2)$ such that 
    \[g_1\equiv b_1\mod{T(T-a_1)(T-a_2)}\text{ and }g_2\equiv b_2\mod{T(T-a_1)(T-a_2)^2}.\] Let $X>0$ be an integer such that $c_1 X, c_2 X\geq 4$. Then, the polynomials $g_1$ for which $\op{deg} g_1< c_1 X$ and such that $g_1\equiv b_1\mod{T(T-a_1)(T-a_2)}$ can be written uniquely as $g_1=b_1 + a_1 T(T-a_1)(T-a_2)$, where $\op{deg} a_1< c_1 X -3$. Thus, the number of such polynomials $g_1$ is exactly equal to $q^{c_1 X -3}$. Likewise, the number of polynomials $g_2=b_2+a_2 T(T-a_1)(T-a_2)^2$ is exactly equal to $q^{c_2 X-4}$. Therefore, one finds that for $X\geq 4\op{max}\{c_1^{-1}, c_2^{-1}\}$, 
    \[\# \mathcal{S}(X)=q^{(c_1+c_2) X-7}.\] Thus, I have that $\lim_{X\rightarrow \infty} \frac{\# \mathcal{S}(X)}{\# \mathcal{C}(X)}=q^{-7}$. Thus in particular, $\mathcal{S}$ has positive density. Let $\mathcal{T}$ be the set of all pairs $w\in \mathcal{C}$ such that the $T$-adic representation attached to $\phi^w$ is surjective. Then, $\mathcal{T}$ contains $\mathcal{S}$ and 
    \[\underline{\mathfrak{d}}(\mathcal{T})\geq \underline{\mathfrak{d}}(\mathcal{S})=\mathfrak{d}(\mathcal{S})>0.\]
    This completes the proof of the main result.
\end{proof}
\begin{remark}\label{only remark}
The density is calculated to be $q^{-7}$ for a choice of $5$-tuple $(b_1, b_2, \lambda_1, \lambda_2, \eta)$. Two choices $(b_1, b_2, \lambda_1, \lambda_2, \eta)$ and $(b_1', b_2', \lambda_1', \lambda_2', \eta')$ could have a non-empty intersection. I have not made an effort to obtain a closed formula for the density of pairs $(g_1, g_2)$ that satisfy the union of conditions imposed for all such $5$-tuples. 
\end{remark}
\bibliographystyle{alpha}
\bibliography{references}
\end{document}